\documentclass[reqno,english]{amsart}
\usepackage[a4paper,margin=1in]{geometry}
\usepackage{amsmath,amssymb,amsthm,mathtools,booktabs,microtype}
\usepackage[colorlinks=true]{hyperref}
\hypersetup{urlcolor=blue, citecolor=blue}
\hypersetup{
	colorlinks=true,
	linkcolor=red
}

\hypersetup{pdftitle={Rigidity of critical points on the sphere in every dimension N >= 3},
  pdfsubject={Full rigidity, resolvent comparison, and hand-checkable low-dimensional estimates}}
\numberwithin{equation}{section}
\newtheorem{theorem}{Theorem}[section]
\newtheorem{proposition}[theorem]{Proposition}
\newtheorem{lemma}[theorem]{Lemma}
\newtheorem{corollary}[theorem]{Corollary}
\theoremstyle{remark}
\newtheorem{remark}[theorem]{Remark}

\newcommand{\SN}{\mathbb{S}^N}
\newcommand{\Sph}{\mathbb S}
\newcommand*{\ud}{d}
\newcommand{\dd}{\,d}

\newcommand{\intS}{\int_{\SN}}
\newcommand{\iintS}{\iint_{\SN\times\SN}}
\DeclareMathOperator{\tr}{tr}

\title[Rigidity of a $Q$-curvature-type problem on $\mathbb{S}^N$]{Rigidity of a $Q$-curvature-type problem on $\mathbb{S}^N$ in every dimension $N\geq3$}

\author[C. Gui]{Changfeng Gui}
\address[C. Gui]{Department of Mathematics, University of Macau, Taipa, Macau}
\email{changfenggui@um.edu.mo}

\author[Y. Hu]{Yeyao Hu}
\address[Y. Hu]{School of Mathmatics and Statistics, Central South University, Changsha, Hunan, P. R. China}
\email{huyeyao@gmail.com}

\author[T. Li]{Tuoxin Li}
\address[T. Li]{Department of Mathematics, University of Macau, Taipa, Macau}
\email{tuoxinli@um.edu.mo}

\author[J. Wei]{Juncheng Wei}
\address[J. Wei]{Department of Mathematics \\ The Chinese University of Hong Kong \\ Shatin, N.T. \\ Hong Kong}
\email{wei@math.cuhk.edu.hk}

\author[Z. Xie]{Zhantong Xie}
\address[Z. Xie]{Department of Mathematics, University of Macau, Taipa, Macau}
\email{yc57479@um.edu.mo}

\author[Z. Ye]{Zikai Ye}
\address[Z. Ye]{Department of Mathematics\\ Uppsala University\\ 751 06 Uppsala  \\ Sweden
}
\email{zikai.ye@math.uu.se}

\begin{document}

\subjclass{35B38, 35J30, 35J61, 35J91, 53C18, 53C21} 

\keywords{rigidity, Q-curvature-type equation, optimal constant, logarithmic Green kernel, critical point}

\begin{abstract}
We prove that for every $N\geq 3$, every solution to the $Q$-curvature-type problem $$ \alpha P_N u + (N-1)!\left(1-\frac{e^{Nu}}{\int_{\mathbb{S}^N} e^{Nu}dw}\right)=0 \ \ \ \ \ \mbox{on} \ \mathbb{S}^N $$ is constant, provided that $ \alpha\ge\frac{1}{2}$ and $\alpha \not =1$.  The proof consists of a spectral gap estimate and pointwise estimates of the normalized density $\left(\int_{\mathbb{S}^N} e^{Nu}dw\right)^{-1}e^{Nu}$.
\end{abstract}
\maketitle

\section{Introduction and main results}

Let $dw$ denote the normalized surface measure on $\mathbb{S}^N$ so that $\int_{\mathbb{S}^N}dw=1$. For $\alpha>0$, we consider the $Q$-curvature-type equation:
\begin{equation}\label{eq:main}
 \alpha P_Nu+(N-1)!\left(1-\frac{e^{Nu}}{\intS e^{Nu}\,dw}\right)=0
 \qquad\hbox{on }\SN,
\end{equation}
where
\begin{equation*}
\begin{aligned}
    P_N:=
    \begin{cases}
        \prod_{k=0}^{\frac{N-2}{2}}(-\Delta+k(N-k-1)),&\text{ for }N\text{ even}, \\
        \left(-\Delta+(\frac{N-1}{2})^2\right)^{\frac{1}{2}}\prod_{k=0}^{\frac{N-3}{2}}(-\Delta+k(N-k-1)),&\text{ for }N\text{ odd}.
    \end{cases}
\end{aligned}
\end{equation*}
 Its action on spherical harmonics is
\begin{equation}\label{eq:spectrum}
 P_N1=0,\qquad P_NY_k=\frac{\Gamma(k+N)}{\Gamma(k)}Y_k,
 \qquad Y_k\in\mathcal H_k,\quad k\geq1,
\end{equation}
where $\mathcal H_k$ is the space of spherical harmonics of degree $k$.  In particular, its first two positive eigenvalues are $N!$ and $(N+1)!$. 

When $N=4$, $P_4$ represents the Paneitz operator on $\mathbb{S}^4$ introduced by Paneitz in \cite{Paneitz2008}. When $N$ is even, the operator $P_N$ belongs to the family of conformally invariant operators $P_{N,g,k}$ with $k=\frac{N}{2}$, constructed by Graham, Jenne, Mason, and Sparling on a Riemannian manifold $(M^N, g)$ in \cite{GJMS1992}, called the GJMS operator. When $N$ is odd, $P_N$ is a nonlocal elliptic pseudo-differential operator of order $N$ and the square root is understood in the spectral sense.
We refer to \cite{CLY2019, CM2023, ChangYang1995, ChangYang1997, DHL2000,DM2008, FG2013, GurMal2015, LiXiong2019, Mal2006} and the references therein for results and background on $Q$-curvature problems and GJMS operators.  

Equation \eqref{eq:main} has a variational structure. In fact, it is the Euler--Lagrange equation of the functional
\begin{equation}\label{eq:Beckner-functional}
J_{\alpha,N}(u):=\frac{\alpha}{2}\int_{\Sph^N}uP_Nu\,\ud w+(N-1)!\int_{\Sph^N}u\,\ud w-\frac{(N-1)!}{N}\log\!\left(\int_{\Sph^N}e^{Nu}\,\ud w\right).
\end{equation}
The well-known Beckner's inequality \cite{Beckner1993} states that $\inf J_{1,N}(u)=0$.
In addition, A. Chang and P. Yang \cite{ChangYang1995} proved that, if $u$ belongs to the zero center-of-mass set
\begin{equation*}
\mathcal{L}_{N}=\left\{u\in H^{\frac{N}{2}}(\mathbb{S}^N)\ :\ \int_{\mathbb{S}^N}e^{Nu} x_j dw=0,\ j=1,\dots, N+1 \right \},
\end{equation*}
then for any $\alpha\geq \frac{1}{2}$, there exists a constant $C(\alpha,N)\geq0$ such that 
\begin{equation*}
J_{\alpha,N}(u)\geq -C(\alpha,N)    
\end{equation*}
for any $u\in \mathcal{L}_{N}$. This motivates the study of the constrained Euler--Lagrange equation of $\mathcal{J}_{\alpha,N}$ in the constraint set $\mathcal{L}_N$.

A critical point of $\mathcal{J}_{\alpha,N}$ under the zero center-of-mass constraint satisfies the following $Q$-curvature-type equation on $\mathbb{S}^N$:
\begin{equation*}
\alpha P_N u+(N-1)!(1-\frac{e^{Nu}}{\int_{\mathbb{S}^N}e^{Nu}dw})=\sum_{i=1}^{N+1}a_i x_i e^{Nu} \ \mbox{on} \ \mathbb{S}^N
\end{equation*}
for some constants $a_i$, $i=1,\dots, N+1$. When $\alpha\neq1$, by applying the Kazdan–Warner identity, one can show that all Lagrange multipliers $a_i$ vanish. 
The converse is also true. In fact, by \cite[Proposition 1.5]{GHW2022}, we have 
\begin{equation*}\label{u centre}
\int_{\mathbb{S}^N} u x_jdw=\int_{\mathbb{S}^N}  e^{Nu}x_jdw=0, \quad j=1,\dots, N+1.    
\end{equation*}
We refer to Chang-Yang \cite{ChangYang1995} and Wei-Xu \cite{WX1998} for more details. 

\medskip

On $\mathbb{S}^2$, \eqref{eq:main} is known as the Nirenberg problem:
\begin{equation*}
-\alpha \Delta u + 1-  \frac{e^{2u}}{\int_{\mathbb{S}^2} e^{2u}}=0 \ \ \mbox{on} \ \mathbb{S}^2.
\end{equation*}
This problem has been extensively studied over the past four decades. See \cite{ChangYang1987, ChangYang1988, JinLiXiong2017} and the references therein. For $\alpha<1$, Feldman, Froese, Ghoussoub, and Gui \cite{FFGG1998} first established the rigidity for axially symmetric functions when $\alpha >\frac{16}{25}-\epsilon$. Gui and Wei \cite{GW2000} subsequently proved the optimal constant $\alpha=\frac{1}{2}$ in the axially symmetric case.
Finally, Gui and Moradifam \cite{GM2018} established the sphere covering inequality to prove that every solution is  axially symmetric if $\alpha\ge \frac{1}{2}$, thereby completing the classification on $\mathbb{S}^2$. For more related results, we refer readers to \cite{ChangGui202, ChangHang2022, GHM2020, SSTW2019}.

For the higher-dimensional equation \eqref{eq:main} on $\mathbb{S}^N$, Wei and Xu \cite{WX1998} obtained rigidity for $\alpha<1$ sufficiently close to $1$, while Zhang \cite{Zhang2025} treated $\alpha>1$. Since direct treatment of \eqref{eq:main} seems difficult, in view of the work of Ghoussoub-Lin \cite{GL2010} and Gui-Moradifam \cite{GM2018}, it is natural to consider axially symmetric functions as a first step in higher dimensions. Sharp results for axially symmetric solutions in dimensions $4$, $6$, and $8$ were established in \cite{GHX2021,GHW2022,GLWY2025,LWY2022}. For odd $N\geq 3$, few results are known about the rigidity. One of the difficulties is the non-locality of the operator $P_N$ when $N$ is odd. 


 The recent work \cite{GLWY} proves the sharp centered Beckner inequality and establishes rigidity for \emph{stable} constrained critical points. In this paper, we prove the rigidity for \emph{all} solutions of \eqref{eq:main} in every dimension $N\geq 3$.
\begin{theorem}\label{thm:main}
Let $N\geq3$ be an integer and let $\alpha\ge\frac12$, $\alpha\neq 1$.  Then every weak solution $u\in H^{\frac{N}{2}}(\mathbb{S}^N)$ of \eqref{eq:main} is constant. 
\end{theorem}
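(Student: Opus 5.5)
Write $\rho:=e^{Nu}/\intS e^{Nu}\,dw$ for the normalized density, so $\intS\rho\,dw=1$ and \eqref{eq:main} reads $\alpha P_Nu=(N-1)!(\rho-1)$. The plan is to test the equation against $u$ and against the first spherical harmonics, and to combine a spectral gap estimate with pointwise control of $\rho$.

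\textbf{Step 1: centering.} By \cite[Proposition 1.5]{GHW2022}, the first moments of both $u$ and $e^{Nu}$ vanish. Hence $u-\bar u$ has no component in $\mathcal H_0\oplus\mathcal H_1$. By \eqref{eq:spectrum}, $P_N\ge (N+1)!$ on this subspace, which gives the spectral gap
\[
\intS uP_Nu\,dw\ \ge\ (N+1)!\intS (u-\bar u)^2\,dw .
\]

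\textbf{Step 2: an integral identity.} Test the equation with $u-\bar u$:
\[
\alpha\intS uP_Nu\,dw=(N-1)!\intS(\rho-1)(u-\bar u)\,dw .
\]
Since $\log\rho=N(u-\bar u)+\text{const}$, the right side equals $\frac{(N-1)!}{N}\intS(\rho-1)\log\rho\,dw$. Comparing with Step 1 gives
\[
\alpha N(N+1)\,N\intS(u-\bar u)^2\,dw\ \le\ \intS(\rho-1)\log\rho\,dw .
\]
This controls the variance of $\log\rho$ by a relative-entropy-type quantity. The reverse comparison needs an upper bound of the form $\intS(\rho-1)\log\rho\,dw\le C\intS(\log\rho-\overline{\log\rho})^2\,dw$. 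Such a bound holds when $\rho$ is pointwise bounded above and below, with a constant depending on those bounds. The inequalities close, forcing $u\equiv\bar u$, provided
\[
C(\|\rho\|_\infty,\|\rho^{-1}\|_\infty)\ <\ \alpha (N+1)/N .
\]
For $\alpha\ge\frac12$ the right side is at least $(N+1)/(2N)$.

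\textbf{Step 3: pointwise bounds on $\rho$ (main obstacle).} Represent $u$ through the Green kernel of $P_N$. On $\SN$ this kernel is logarithmic:
\[
u-\bar u=\frac{(N-1)!}{\alpha}\,G*(\rho-1),\qquad G(x,y)=\frac{1}{(N-1)!}\log\frac{1}{|x-y|}+\text{const}.
\]
Therefore
\[
\rho(x)=c\exp\Bigl(\frac{N}{\alpha}\intS\log\tfrac{1}{|x-y|}\,\rho(y)\,dw(y)\Bigr).
\]
The task is to bound $\sup\rho$ and $\inf\rho$, using $\alpha\ge\frac12$ and the centering of $\rho$. Concentration is the enemy. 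I would try a Jensen/convexity argument: a logarithmic potential of a probability measure with zero center of mass should be nearly flat, so its oscillation is small. A first-moment-free measure cannot sit near a single point, and a second-harmonic expansion of the log kernel bounds the oscillation. The resulting explicit estimates must be sharp enough to satisfy the inequality of Step 2. This is the delicate, dimension-dependent part. For small $N$ (say $N=3,4,5$) the constants may need to be checked by hand. For large $N$ there is asymptotic slack, since the gap ratio $(N+1)!/N!$ and the factor $N/\alpha$ scale favourably.

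\textbf{Step 4: conclusion.} Alternatively to the pointwise comparison, one can compare resolvents. Write the linearization as $\alpha P_N-N!\,\rho$ and use the Kazdan--Warner identity to exclude the degree-one modes. Either route yields $u-\bar u\equiv0$. The hypothesis $\alpha\ne1$ enters only through Step 1: the Lagrange multipliers vanish, and nonconstant bubbles do not exist, which is precisely the Beckner-extremal case $\alpha=1$.
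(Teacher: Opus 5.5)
\textbf{Steps 1--2.} These are, in substance, the paper's Lemma~\ref{lem:amplitude}, but there is an arithmetic slip that matters. Since $\int(\log\rho-\overline{\log\rho})^2\,dw=N^2\int(u-\bar u)^2\,dw$, your inequality says
$\alpha(N+1)\int(\log\rho-\overline{\log\rho})^2\,dw\le\int(\rho-1)\log\rho\,dw$.
So the closing threshold is $\alpha(N+1)$, not $\alpha(N+1)/N$. As written, your criterion can never be met at $\alpha=\frac12$: any constant depending only on pointwise bounds is at least $1$ (test $\rho=1+\varepsilon\phi$), while $(N+1)/(2N)<1$.

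With the correct threshold, no lower bound on $\rho$ is needed. Set $V=\log\rho-\overline{\log\rho}$ and use $\int V\,dw=0$, $\overline{\log\rho}\le0$, and monotonicity of secant slopes of $\exp$; then you may take $C=\frac{D-1}{\log D}$ with $D=\max\rho$. So this step only tells you that a nonconstant solution has $\frac{D-1}{\log D}\ge\alpha(N+1)\ge\frac{N+1}{2}$, i.e.\ $D$ must be large. Everything then rests on an upper bound for $D$.

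\textbf{Step 3 is where the proof has a genuine gap.} This upper bound is the real content of the argument, and the mechanism you propose fails. Zero center of mass does not make the logarithmic potential flat. A centered probability measure can put half its mass near $v$ and half near $-v$, and then $-\int\log(1-v\cdot\xi)\,d\mu(\xi)$ is arbitrarily large. Nor can the degree-two part of the kernel control the supremum, since $-\log(1-t)$ has components of every degree. You must use the equation a second time to control \emph{all} moments of $\mu=\rho\,dw$, uniformly in direction. The paper does this in three steps:
\begin{enumerate}
\item the chord identity (Lemma~\ref{lem:chord}), derived from $\nabla u(\xi)=\frac1\alpha\int\frac{\eta-(\xi\cdot\eta)\xi}{1-\xi\cdot\eta}\,d\mu(\eta)$;
\item the covariance bound $m=\lambda_{\max}(M)<\frac1{N-1}$ (Proposition~\ref{prop:M});
\item a Cauchy--Schwarz argument on $W=(x-y)\bigl((1+x)^{j-1}-(1+y)^{j-1}\bigr)$ with weights $1-s$ and $(1-s)^{-1}$, which yields $\mathcal M_j^*<\frac{2(j-1)+c}{j-1+c}\mathcal M_{j-1}^*$ with $c=N\frac{1-m}{1+m}>N-2$.
\end{enumerate}
The second moments enter only as the seed $\mathcal M_2^*=1+m$ of this recursion. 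Comparing with the moments of $\nu_c$ gives $T_N(v)<\mathcal B(c)$ and hence $\log D<\frac{2N}{(N-1)(N-2)}$. This contradicts the lower bound for $N\ge4$; $N=3$ needs a further refinement through $g_*$ (Lemma~\ref{lem:g_*upper}).

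\textbf{Two further points.} Your kernel is off by a factor $2$: $-\log(1-\xi\cdot\eta)=2\log\frac1{|\xi-\eta|}+\log2$, so the exponent is $\frac{2N}{\alpha}$, which is not harmless at these sharp constants. Finally, Step 4 is not an alternative route. The linearization $\alpha P_N-N!\rho$ at an unknown solution carries only stability information, which handles stable critical points, not all solutions.
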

\begin{remark}
The proof of regularity in \cite[Proposition 2.2]{GLWY} also works if we only assume $u\in H^{\frac{N}{2}}(\mathbb{S}^N)$ is a weak solution of \eqref{eq:main}. Hence, we can always assume $u\in C^{\infty}(\mathbb{S}^N)$ in the rest of this paper.
\end{remark}
\begin{remark}
   When $\alpha=1$, there is a family of solutions $u(x)=-\log(1-a \cdot x)$ to \eqref{eq:main} for any $|a|<1$. This is why we have to exclude $\alpha=1$ in Theorem \ref{thm:main}. In addition, this family of solutions are also optimizers of Beckner's inequality.
\end{remark}

Previous papers \cite{GHX2021, GHW2022, GLWY2025, GW2000, LWY2022} concerning axially symmetric functions rely heavily on a priori numerical observations. Hence, this dimension-dependent strategy does not extend to general dimensions $N$. In \cite{GLWY}, the authors established identities for general functions and chose an appropriate test function in the second variation of the stable critical point. 

In this paper, we will use some preliminary integral identities from  \cite{GLWY}. The proof studies the normalized density $g=\frac{e^{Nu}}{\intS e^{Nu} dw}$. Two pointwise estimates for the maximum $D=\max_{\SN}g$ lead to a contradiction when $g$ is non-constant. First, vanishing of the degree-one component and the spectral gap at degree two give a lower bound for $D$. Second, the logarithmic Green representation of $u$ yields an upper bound.

This paper is organized as follows. In Section \ref{sec prelim}, we recall some integral identities and estimate the eigenvalues of the covariance matrix.  Section~\ref{sec est D} gives pointwise estimates on the maximum of the density function $g$. In Section \ref{sec pf main}, we combine these estimates to conclude the proof of Theorem \ref{thm:main}.

\section{Integral identities and the eigenvalues of \texorpdfstring{$M$}{M}}\label{sec prelim}

Throughout the proof, we use the notations
\begin{equation}\label{def:notation}
 \gamma=\intS e^{Nu}\,dw,\qquad
 g=\frac{e^{Nu}}\gamma,\qquad d\mu=g\,dw,\qquad
 M_{ij}=\intS\xi_i\xi_j\,d\mu(\xi).
\end{equation}
Thus $\intS g\,dw=1$, $M$ is positive definite, and $\tr M=1$.

\begin{lemma}[Proposition 1.5 in \cite{GHW2022}]\label{lem:center}
Let $u$ be a solution of \eqref{eq:main} with $\alpha\neq 1$. Then
	\begin{equation}\label{eq:center}
 \intS\xi g\,dw(\xi)=0,\qquad \intS\xi u\,dw(\xi)=0.
\end{equation}
\end{lemma}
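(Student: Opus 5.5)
The plan is to use the Kazdan--Warner identity, which is available because the conformal vector fields on $\SN$ are the gradients $\nabla x_j$ of the first spherical harmonics. Concretely, for each $j$ we test \eqref{eq:main} against the function $\langle\nabla u,\nabla x_j\rangle$, i.e. against the infinitesimal conformal variation of $u$. On the nonlinear side an integration by parts gives
\[
\intS e^{Nu}\langle\nabla u,\nabla x_j\rangle\,dw=\frac1N\intS\langle\nabla e^{Nu},\nabla x_j\rangle\,dw = \intS x_j e^{Nu}\,dw,
\]
using $-\Delta x_j=Nx_j$. The constant term gives $\intS\langle\nabla u,\nabla x_j\rangle\,dw=N\intS x_ju\,dw$.

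For the operator term, the key input is the conformal covariance of $P_N$ (in the spectral sense when $N$ is odd). It yields the identity
\[
\intS P_Nu\,\langle\nabla u,\nabla x_j\rangle\,dw=\frac{N}{2}\intS x_j\,uP_Nu\,dw-\dots,
\]
or, more usefully, the standard Kazdan--Warner form obtained for the Beckner functional. Combining this with the equation, we get that $\intS x_j e^{Nu}\,dw$ (the conformal Kazdan--Warner term) equals $(1-\alpha)$ times a quantity. I would rather follow the route of Wei--Xu and Chang--Yang. Write $\phi_t$ for the conformal dilation flow generated by $\nabla x_j$, and set $u_t=u\circ\phi_t+\frac1N\log|J_{\phi_t}|$. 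Then $J_{1,N}(u_t)=J_{1,N}(u)$ by the conformal invariance of Beckner's functional. Differentiating at $t=0$ and using that $u$ is critical for $J_{\alpha,N}$ gives
\[
0=\frac{d}{dt}\Big|_{t=0}J_{\alpha,N}(u_t)=(\alpha-1)\frac{d}{dt}\Big|_{t=0}\frac12\intS u_tP_Nu_t\,dw .
\]
The last derivative equals $\intS P_Nu\,\bigl(\langle\nabla u,\nabla x_j\rangle - x_j\bigr)\,dw$ up to a constant. Using the equation once more to replace $P_Nu$, everything reduces, with $\alpha\ne1$, to a linear relation between $\intS x_jg\,dw$ and $\intS x_ju\,dw$.

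To close the argument, we need a second relation. Testing the equation directly against $x_j$ gives
\[
\alpha N!\intS x_ju\,dw=(N-1)!\intS x_jg\,dw,
\]
since $P_Nx_j=N!\,x_j$. Substituting this into the first relation yields $c(\alpha)\intS x_jg\,dw=0$ with a coefficient $c(\alpha)$ that is nonzero exactly when $\alpha\ne1$. Hence both integrals in \eqref{eq:center} vanish.

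The main obstacle is making the Kazdan--Warner computation rigorous for odd $N$, where $P_N$ is nonlocal. For this I would use the intertwining relation $P_N(v\circ\phi)=|J_\phi|P_Nv\circ\phi$ on the spectral side, together with smoothness of $u$ (from the regularity remark), so that all differentiations in $t$ are justified.
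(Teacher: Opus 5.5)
Your argument is correct and follows essentially the route behind the result the paper quotes. The paper gives no proof of its own: it cites Proposition 1.5 of GHW2022, which, as the introduction indicates, rests on the Kazdan--Warner identity (the infinitesimal conformal invariance of $J_{1,N}$ that you differentiate), combined with testing \eqref{eq:main} against $x_j$.

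Your conformal step is in fact simpler than you anticipate. Since $\intS g\,(\langle\nabla u,\nabla x_j\rangle-x_j)\,dw=0$ and $\intS(\langle\nabla u,\nabla x_j\rangle-x_j)\,dw=N\intS x_ju\,dw$, it gives $\intS x_ju\,dw=0$ directly, and then $\intS x_jg\,dw=\alpha N\intS x_ju\,dw=0$. The identity your first paragraph was reaching for is just $\intS P_Nu\,\langle\nabla u,\nabla x_j\rangle\,dw=0$ for every smooth $u$.
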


\begin{lemma}\label{lem:green}
Let $u$ be a solution of \eqref{eq:main} and set $\bar u=\intS u\,dw$. Then
\begin{equation}\label{eq:green}
 u(\xi)-\bar u=-\frac1\alpha\intS
 \log(1-\xi\cdot\eta)(g(\eta)-1)\,dw(\eta),
\end{equation}
and
\begin{equation}\label{eq:gradient}
 \nabla u(\xi)=\frac1\alpha\intS
 \frac{\eta-(\xi\cdot\eta)\xi}{1-\xi\cdot\eta}\,d\mu(\eta).
\end{equation}
\end{lemma}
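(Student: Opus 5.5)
The proof of Lemma~\ref{lem:green} rests on the fact that $-\log(1-\xi\cdot\eta)$ is, up to a constant, the Green kernel of $P_N$ on $\SN$ modulo constants. We establish this spectrally and then apply it to \eqref{eq:main}.

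\textbf{Step 1 (Funk--Hecke for the log kernel).} The kernel $K(\xi,\eta)=-\log(1-\xi\cdot\eta)$ is integrable and zonal. By the Funk--Hecke formula, for $Y_k\in\mathcal H_k$,
\[
\intS K(\xi,\eta)Y_k(\eta)\,dw(\eta)=\lambda_kY_k(\xi).
\]
We claim that $\lambda_k=\frac{(N-1)!\,\Gamma(k)}{\Gamma(k+N)}$ for $k\ge1$, i.e.\ $\lambda_k$ is $(N-1)!$ times the inverse of the eigenvalue in \eqref{eq:spectrum}. To verify this, write $\log(1-t)=-\sum_{m\ge1}t^m/m$, or use the standard computation $\lambda_k=c_N\int_{-1}^1 -\log(1-t)\,P_k^{(N)}(t)(1-t^2)^{\frac{N-2}{2}}dt$ with the Gegenbauer polynomials $P_k^{(N)}$. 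Integrating by parts via the Rodrigues formula converts $\log(1-t)$ into derivatives $(1-t)^{-j}$, and the resulting Beta integrals produce exactly $\Gamma(k)\Gamma(N)/\Gamma(k+N)$. This identity is the classical one behind Beckner's inequality, so we may cite it from \cite{Beckner1993} rather than recompute it. This normalization is the only real obstacle: one must check that the constant is precisely $(N-1)!$, consistent with the factor in \eqref{eq:main}.

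\textbf{Step 2 (representation formula).} Rewrite \eqref{eq:main} as $\alpha P_Nu=(N-1)!(g-1)$. The right-hand side has zero mean, and $u$ is smooth by the regularity remark. Expand $u-\bar u=\sum_{k\ge1}u_k$ and $g-1=\sum_{k\ge1}h_k$ in spherical harmonics. The equation gives $u_k=\frac{(N-1)!\,\Gamma(k)}{\alpha\,\Gamma(k+N)}h_k=\frac1\alpha\lambda_kh_k$ for each $k\ge1$. Summing over $k$ and using Step 1 yields
\[
u-\bar u=\frac1\alpha\intS K(\cdot,\eta)(g(\eta)-1)\,dw(\eta),
\]
which is \eqref{eq:green}. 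The identity holds in $L^2$ and hence pointwise, since both sides are continuous: $K\in L^p$ uniformly in $\xi$ and $g$ is bounded.

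\textbf{Step 3 (gradient).} Differentiate \eqref{eq:green} in $\xi$. The term with the constant $1$ drops out, because $\intS\log(1-\xi\cdot\eta)\,dw(\eta)$ is independent of $\xi$ by rotation invariance. For the remaining term, the tangential gradient of $\xi\mapsto-\log(1-\xi\cdot\eta)$ is
\[
\frac{\eta-(\xi\cdot\eta)\xi}{1-\xi\cdot\eta}.
\]
Its size is $\sqrt{1-(\xi\cdot\eta)^2}/(1-\xi\cdot\eta)\lesssim|\xi-\eta|^{-1}$, which is integrable on $\SN$ since $N\ge2$. Differentiation under the integral is therefore justified by dominated convergence; alternatively, differentiate the difference quotient using the boundedness of $g$. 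This gives \eqref{eq:gradient} with $d\mu=g\,dw$.
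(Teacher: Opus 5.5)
Your proposal is correct and follows essentially the paper's route. The paper gets \eqref{eq:green} by citing the Green representation from \cite{GLWY}, which rests on the same Funk--Hecke/Beckner identity $\lambda_k=(N-1)!\,\Gamma(k)/\Gamma(k+N)$ that you sketch (your normalization checks out, e.g.\ $\lambda_1=1/N$). It then differentiates under the integral and disposes of the constant term by rotational invariance, exactly as you do.

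One small correction to Step 3: plain dominated convergence does not literally apply, because the singularity at $\eta=\xi$ moves with $\xi$, so no single dominating function exists near $\eta=\xi$. You can close this with the paper's regularization $1-\xi\cdot\eta+\varepsilon$. Alternatively, use uniform integrability of the difference quotients, which holds because $|\xi-\eta|^{-1}\in L^p(\SN)$ for some $p>1$.
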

\begin{proof}
\eqref{eq:green} follows directly from Lemma 3.2 and equation (3.7) of \cite{GLWY}. 

Differentiating \eqref{eq:green} and using $\nabla_\xi(\xi\cdot\eta)=\eta-(\xi\cdot\eta)\xi$, we obtain
\begin{equation*}
    \nabla u(\xi)=\frac1\alpha\intS \frac{\eta-(\xi\cdot\eta)\xi}{1-\xi\cdot\eta}(g(\eta)-1)\dd w(\eta).
\end{equation*}
To justify the differentiation, replace $1-\xi\cdot\eta$ by $1-\xi\cdot\eta+\varepsilon$ and then let $\varepsilon$ tend to zero. 
Moreover,
\begin{equation*}
    \intS\frac{\eta-(\xi\cdot\eta)\xi}{1-\xi\cdot\eta}\dd w(\eta)=0
\end{equation*}
by rotational invariance. Thus we obtain \eqref{eq:gradient}.
\end{proof}

Fix a unit vector $v\in\mathbb R^{N+1}$ and, for $\xi,\eta\in\SN$, write
\begin{equation*}
 x=v\cdot\xi,\qquad y=v\cdot\eta,\qquad s=\xi\cdot\eta.
\end{equation*}

\begin{lemma}\label{lem:chord}
For every $\varphi\in C^1([-1,1])$,
\begin{align}\label{eq:chord}
 &\iintS\frac{(x-y)(\varphi(x)-\varphi(y))}{1-s}\,d\mu(\xi)d\mu(\eta)\notag\\
 &\hspace{1cm}=2(1-\alpha)\intS x\varphi(x)\,d\mu
       +\frac{2\alpha}{N}\intS(1-x^2)\varphi'(x)\,d\mu.
\end{align}
\end{lemma}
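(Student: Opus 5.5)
The plan is to test the gradient representation \eqref{eq:gradient} of Lemma~\ref{lem:green} against a tangential vector field adapted to $v$, and to integrate against $d\mu$. The identity should come out of two independent evaluations of the same quantity: one through the double integral, one through integration by parts on $\SN$.

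Concretely, let $V(\xi)=\varphi(x)\,(v-x\xi)$, where $x=v\cdot\xi$. This field is tangent to $\SN$, since $v-x\xi=\nabla_\xi x$. I will compute $I:=\intS \nabla u\cdot V\,d\mu$ in two ways.

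\emph{First evaluation, via \eqref{eq:gradient}.} Substituting the formula for $\nabla u$ gives
$$I=\frac1\alpha\iintS \frac{(\eta-s\xi)\cdot(v-x\xi)}{1-s}\,\varphi(x)\,d\mu(\eta)d\mu(\xi).$$
A direct expansion gives $(\eta-s\xi)\cdot(v-x\xi)=y-sx$. I will split this as $y-sx=(y-x)+x(1-s)$. The second piece yields $\intS x\varphi(x)\,d\mu$, because $\mu$ is a probability measure. The first piece is $\iintS \varphi(x)(y-x)/(1-s)\,d\mu\,d\mu$. Symmetrizing it in $(\xi,\eta)$ turns it into $-\tfrac12 K$, where $K$ is the left-hand side of \eqref{eq:chord}. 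Hence
$$\alpha I=\intS x\varphi\,d\mu-\tfrac12 K.$$

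\emph{Second evaluation, via integration by parts.} Since $g=e^{Nu}/\gamma$, we have $g\nabla u=\frac1N\nabla g$, and therefore
$$I=\frac1N\intS \nabla g\cdot V\,dw=-\frac1N\intS g\,\mathrm{div}V\,dw.$$
On $\SN$ one has $|\nabla x|^2=1-x^2$ and $\Delta x=-Nx$. It follows that $\mathrm{div}V=(1-x^2)\varphi'(x)-Nx\varphi(x)$, so
$$I=\intS x\varphi\,d\mu-\frac1N\intS(1-x^2)\varphi'\,d\mu.$$

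Equating the two expressions for $\alpha I$ gives
$$K=2(1-\alpha)\intS x\varphi\,d\mu+\frac{2\alpha}{N}\intS(1-x^2)\varphi'\,d\mu,$$
which is \eqref{eq:chord}.

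The main technical point is justifying Fubini and the symmetrization, because the kernel is singular. I would handle it as follows.
\begin{itemize}
\item Since $1-s=\tfrac12|\xi-\eta|^2$ and $|\eta-s\xi|\le|\xi-\eta|$, the kernel in $I$ is $O(|\xi-\eta|^{-1})$. This is integrable on $\SN\times\SN$ for $N\ge 2$, and $g$ is smooth and bounded, so Fubini applies.
\item Before symmetrizing, the piece $\varphi(x)(y-x)/(1-s)$ is also $O(|\xi-\eta|^{-1})$, hence absolutely integrable, so swapping $\xi$ and $\eta$ in it is legitimate.
\item After symmetrization, the integrand $(x-y)(\varphi(x)-\varphi(y))/(1-s)$ is even bounded. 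This uses $|x-y|\le|\xi-\eta|$ together with the Lipschitz bound on $\varphi$.
\end{itemize}
Alternatively, one can regularize with $1-s+\varepsilon$, as in the proof of Lemma~\ref{lem:green}, and pass to the limit $\varepsilon\to0$ by dominated convergence.
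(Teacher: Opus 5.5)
Your proof is correct and follows essentially the same route as the paper: you pair \eqref{eq:gradient} with $\varphi(x)\nabla x$, split $y-xs=(y-x)+x(1-s)$, integrate by parts using $\nabla g=Ng\nabla u$ and $\Delta x=-Nx$, and symmetrize in $(\xi,\eta)$. Your integrability justification for Fubini and the symmetrization (the $O(|\xi-\eta|^{-1})$ kernel bounds) is a detail the paper leaves implicit.
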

\begin{proof}
	Since $\nabla g=Ng\nabla u$, integration by parts gives
		\begin{equation}\label{int vphi 1}
			\begin{aligned}
					N\intS\varphi(x)\langle\nabla u,\nabla x\rangle\,d\mu
				=&\intS\varphi(x)\langle\nabla g,\nabla x\rangle\,dw(\xi)\\
				=&-\intS\text{div}(\varphi(x)\nabla x)\,d\mu\\
				=&\intS\bigl(Nx\varphi(x)-(1-x^2)\varphi'(x)\bigr)\,d\mu.
			\end{aligned}
		\end{equation}
		By \eqref{eq:gradient},
		\begin{equation*}
			\alpha\langle\nabla u(\xi),\nabla x\rangle
			=\intS\frac{y-xs}{1-s}\,d\mu(\eta)
			=x+\intS\frac{y-x}{1-s}\,d\mu(\eta).
		\end{equation*}
		Multiply the above formula by $\varphi(x)$ and integrate. Using \eqref{int vphi 1}, we obtain
		\begin{equation*}
	\frac{\alpha}{N}	\intS\bigl(Nx\varphi(x)-(1-x^2)\varphi'(x)\bigr)\,d\mu(\xi)	=\intS x\varphi(x)d \mu(\xi)+\iintS\frac{y-x}{1-s}\varphi(x)\,d\mu(\xi)\,d\mu(\eta).
		\end{equation*}
		 Interchanging $\xi$ and $\eta$ in the double integral and combining the two expressions prove \eqref{eq:chord}.
\end{proof}

Let $v$ be a unit eigenvector of $M$ with eigenvalue $m$. For $\xi,\eta\in \mathbb{S}^N$, write $x=v\cdot \xi$, $y=v\cdot \eta$ and $s=\xi\cdot \eta$. Define the kernel
\begin{equation}
D_v(\xi,\eta):=
\begin{cases}
\frac{(x-y)^2}{1-s},
  &\qquad \xi\neq \eta,\\
0,&\qquad \xi=\eta.
\end{cases}
\end{equation}
Since
\begin{equation*}
(x-y)^2\leq |\xi-\eta|^2=2(1-s),
\end{equation*}
we have $0\leq D_v\leq 2$. 

With the help of $D_v$, in the following, we establish an estimate of the largest eigenvalue of $M$, which refines the estimate in \cite[Theorem 1.4]{GLWY}.

\begin{proposition}\label{prop:M}
Let $0\leq \delta<1$ with $\delta\leq g_*:=\min_{\mathbb{S}^N} g$. For $m=\lambda_{max}(M)$, we have $m\geq \frac{1}{N+1}$ and
\begin{equation}\label{eq:Mbound}
F_N(m,\delta):=N\left(m-\frac{\delta}{N+1}\right)^2(1-\delta)-\left(\frac{1}{2}-\frac{\delta}{N+1}\right)((N+1)m-1)\left(1+m-\frac{N+2}{N+1}\delta \right)\geq 0.
\end{equation}
The inequality is strict when $\delta=0$. In particular, we have $m<\frac{1}{N-1}$.
\end{proposition}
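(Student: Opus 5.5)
The plan is to combine the identity of Lemma~\ref{lem:chord} with $\varphi(x)=x$, which eliminates $\alpha$ via $\alpha\ge\frac12$, with a Cauchy--Schwarz lower bound on the chord integral that sees the pointwise floor $g\ge\delta$. The part of the splitting argument below that I have not pinned down is the exact choice of the product measure and weight in that lower bound, so that it lands on precisely the form of $F_N$.

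\textbf{Step 1: $m\ge\frac1{N+1}$ and the upper bound.} Since $\tr M=1$ and $M$ is $(N+1)\times(N+1)$, the largest eigenvalue satisfies $m\ge\frac1{N+1}$. Take $v$ a unit eigenvector for $m$, so $\intS x^2\,d\mu=m$ and $\intS(1-x^2)\,d\mu=1-m$. Lemma~\ref{lem:chord} with $\varphi(x)=x$ gives
\[
\iintS D_v\,d\mu\,d\mu=2(1-\alpha)m+\frac{2\alpha}{N}(1-m)=2m-\frac{2\alpha}{N}\bigl((N+1)m-1\bigr).
\]
Because $(N+1)m-1\ge0$ and $\alpha\ge\frac12$, this yields the $\alpha$-free bound
\[
\iintS D_v\,d\mu\,d\mu\le 2m-\frac{(N+1)m-1}{N}.
\]

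\textbf{Step 2: a lower bound that uses $\delta$.} Split $\mu=\delta\,w+\nu$, where $\nu=(g-\delta)\,dw\ge0$ has mass $1-\delta$. Since $D_v\ge0$, all cross terms are nonnegative and can be handled separately. Lemma~\ref{lem:center} and $\intS\xi\,dw=0$ give $\intS x\,d\nu=0$, and $\intS x^2\,d\nu=m-\frac{\delta}{N+1}=:m'$. Hence
\[
\iint(x-y)^2\,d\nu\,d\nu=2(1-\delta)m'.
\]
I would then apply Cauchy--Schwarz in the form
\[
\Bigl(\iint(x-y)^2\,d\nu\,d\nu\Bigr)^2\le\iint D_v\,d\nu\,d\nu\cdot\iint(x-y)^2(1-s)\,d\nu\,d\nu .
\]
The last factor expands into low moments of $\nu$. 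Using $\intS\xi\,d\nu=0$, the mixed terms reduce to expressions like $\sum_{i,j}(\int x\xi_i\,d\nu)^2$, which are controlled by $m'$ and by $v$ being an eigenvector, which kills the off-diagonal moments $\int x\xi_i\,d\mu$. This gives an expression linear in $m$ and $\delta$ of the shape $2(1-\delta)\bigl(1+m-\frac{N+2}{N+1}\delta\bigr)$ up to normalisation.

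The terms of $\iint D_v\,d\mu\,d\mu$ containing $w$ are computed exactly, since $\intS D_v(\xi,\eta)\,dw(\eta)$ is zonal and explicit. Their contribution should be what produces the factor $\frac12-\frac{\delta}{N+1}$ in place of $\frac12$ after it is subtracted from the upper bound of Step 1. Combining Steps 1 and 2 and clearing denominators should give exactly $F_N(m,\delta)\ge0$.

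\textbf{Main obstacle.} The hard step is organising the splitting so that the constants match $F_N$ exactly. In particular one must decide whether the Cauchy--Schwarz should be applied to $\nu\otimes\nu$ alone or to $\mu\otimes\mu$ with the $\delta$-part subtracted. I would first try $\nu\otimes\nu$ with the $w$-terms computed explicitly, and check against the case $\delta=0$.

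\textbf{Case $\delta=0$ and the final bound.} Here Cauchy--Schwarz is strict, because $(1-s)$ is not proportional to $D_v$ on the support of $\mu\otimes\mu$ when $g>0$ everywhere, so the inequality is strict. Explicitly,
\[
F_N(m,0)=Nm^2-\tfrac12\bigl((N+1)m-1\bigr)(1+m)=\tfrac12\bigl((N-1)m-1\bigr)(m-1).
\]
Since $m<1$ (because $g>0$ is not concentrated on the $v$-axis), $F_N(m,0)>0$ forces $(N-1)m-1<0$, that is, $m<\frac1{N-1}$.
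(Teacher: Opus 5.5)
Your plan is the paper's proof. The paper normalizes $\nu$ to the probability measure $\mu_\delta=\nu/(1-\delta)$ and quotes the $\varphi(x)=x$ identity and the zonal formula $\intS D_v(\xi,\eta)\,dw(\eta)=\frac1N+\frac{N-1}{N}x^2$ from \cite{GLWY}. Your first guess, Cauchy--Schwarz on $\nu\otimes\nu$ with the $w$-terms subtracted exactly rather than dropped by positivity, is the right one and lands precisely on $F_N(m,\delta)\ge0$, because the $w$-terms do not involve $\alpha$. One slip to fix when you carry it out: the second factor is $\iint(x-y)^2(1-s)\,d\nu\,d\nu=2m'\bigl(1+m-\frac{N+2}{N+1}\delta\bigr)$, quadratic in $m$ with $m'$ rather than $1-\delta$ in front, so Cauchy--Schwarz gives $\iint D_v\,d\nu\,d\nu\ge\frac{2(1-\delta)^2m'}{1-\delta+m'}$.
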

\begin{proof}
The lower bound of $m$ follows from $m=\lambda_{max}(M)$ and $\text{tr} M=1$.

In the following, we consider 
\begin{equation*}
g_{\delta}:=\frac{g-\delta}{1-\delta}\geq 0,
\end{equation*}
the measure $d\mu_{\delta}:=g_{\delta}dw$, and the matrix $M_{\delta}$ defined by
\begin{equation*} M_{\delta,ij}:=\intS\xi_i\xi_j\,d\mu_{\delta}(\xi)=\frac{1}{1-\delta}\left(M_{ij}-\frac{\delta_{ij}}{N+1}\right).
\end{equation*}

Then we have
\begin{equation*}
\int_{\mathbb{S}^N}d\mu_{\delta}=1,\qquad \int_{\mathbb{S}^N}\xi d\mu_{\delta}=0.
\end{equation*}

Recall from \cite[Section 3]{GLWY} that
\begin{equation*}
\iintS D_v d\mu d\mu=2(m-\alpha t)
\end{equation*}
with $t=\frac{(N+1)m-1}{N}$,
\begin{equation*}
\intS D_v(\xi,\eta) dw(\eta) =\frac{1}{N}+\frac{N-1}{N}x^2
\end{equation*}
and
\begin{equation*}
\iintS D_v dw dw=\frac{2}{N+1}.
\end{equation*}

It follows that
\begin{equation}\label{eq:Dvmudelta}
\iintS D_v d\mu_{\delta} d\mu_{\delta}=2(m_{\delta}-\alpha_{\delta} t_{\delta})
\end{equation}
with
\begin{equation*}
m_{\delta}:=\frac{1}{1-\delta}\left(m-\frac{\delta}{N+1}\right),\qquad \alpha_{\delta}:=\frac{1}{1-\delta}\left(\alpha-\frac{\delta}{N+1}\right),\qquad t_{\delta}:=\frac{(N+1)m_{\delta}-1}{N}.
\end{equation*}

 Applying the argument of \cite[Lemma 3.7]{GLWY} to $d\mu_{\delta}$ gives
\begin{equation*}
\iintS D_v d\mu_{\delta} d\mu_{\delta}\geq \frac{2m_{\delta}}{1+m_{\delta}}.
\end{equation*}
Combining this bound with \eqref{eq:Dvmudelta} yields 
\begin{equation*}
Nm_{\delta}^2\geq \alpha_{\delta}((N+1)m_{\delta}-1)(1+m_{\delta}),
\end{equation*}
Since $\alpha\geq \frac{1}{2}$, we obtain \eqref{eq:Mbound}. 

The inequality is strict when $\delta=0$ since $g>0$. Consequently, we have
\begin{equation*}
0<F_N(m,0)=\frac{1}{2}(m-1)((N-1)m-1).
\end{equation*}
Since $tr (M)=1$ and $M$ is positive definite, this yields $m<\frac{1}{N-1}$.
\end{proof}

With the help of this proposition, we can obtain the following upper bound for $g_*$.
\begin{lemma}\label{lem:g_*upper}
If $m\geq \frac{1}{N}$, then 
    \begin{equation*}
    g_*<\frac{N}{3}(1-(N-1)m).
    \end{equation*}
 
\begin{proof}
Set $\delta=\frac{N}{3}(1-(N-1)m)$. Suppose on the contrary that $g_*\geq \delta$. By Proposition \ref{prop:M}, we have $0<\delta\leq \frac{1}{3}$ and $\delta \leq g_*$. Then a direct computation gives
\begin{equation*}
F_N(\frac{N-3\delta}{N(N-1)},\delta)=-\frac{\delta}{2N^2(N-1)(N+1)^2}R_N(\delta),
\end{equation*}
where $F_N$ is defined in Proposition \ref{prop:M} and
\begin{equation*}
R_N(\delta)=(2N^4+4N^3+12N^2+24N+18)\delta^2-(3N^4+4N^3+16N^2+24N+9)\delta+(N^4+5N^2+6N).
\end{equation*}

One can check that $R_N$ is decreasing on $0<\delta \leq \frac{1}{3}$ and 
\begin{equation*}
R_N(\frac{1}{3})=\frac{2N^2(N-2)^2+N^2+6N-9}{9}>0.
\end{equation*}
It follows that $F_N(\frac{N-3\delta}{N(N-1)},\delta)<0$, which is a contradiction to Proposition \ref{prop:M}.

\end{proof}
\end{lemma}

\section{Estimates at a maximum point}\label{sec est D}

To prove Theorem \ref{thm:main}, from now on, we assume for contradiction that $u$ is non-constant. Then
\begin{equation}\label{eq:D}
 D:=\max_{\SN}g>1.
\end{equation}

\subsection{A lower bound for $D$}
The spectral gap yields the following lower bound for $D$.
\begin{lemma}\label{lem:amplitude}
If $u$ is a non-constant solution of \eqref{eq:main}, then
\begin{equation}\label{eq:amplitude}
 \frac{D-1}{\log D}>\alpha(N+1)\geq\frac{N+1}{2}.
\end{equation}
\end{lemma}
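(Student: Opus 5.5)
The plan is to test the equation against $u$ itself, use the spectral gap of $P_N$ above degree one, and then compare two integrals pointwise in terms of $g$. Write $w=u-\bar u$. Since $Nu=\log g+\log\gamma$, we have $w=\frac1N\bigl(\log g-\intS\log g\,dw\bigr)$. Equation \eqref{eq:main} reads $\alpha P_N w=(N-1)!\,(g-1)$. By Lemma \ref{lem:center}, both $w$ and $g-1$ have vanishing degree-zero and degree-one spherical harmonic components. Hence both expand only in $\mathcal H_k$ with $k\ge2$, where by \eqref{eq:spectrum} $P_N$ has eigenvalues at least $(N+1)!$.

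\emph{Step 1: spectral gap.} Pair the equation with $w$:
\[
\intS (g-1)w\,dw=\frac{\alpha}{(N-1)!}\intS wP_Nw\,dw .
\]
On the span of $\mathcal H_k$, $k\ge2$, we have $\intS wP_Nw\,dw\le \frac{1}{(N+1)!}\intS |P_Nw|^2\,dw$. Substituting $P_Nw=\frac{(N-1)!}{\alpha}(g-1)$ then gives
\[
\intS (g-1)w\,dw\le \frac{(N-1)!}{\alpha\,(N+1)!}\intS (g-1)^2\,dw=\frac{1}{\alpha N(N+1)}\intS (g-1)^2\,dw .
\]
Since $\intS (g-1)\,dw=0$, the left-hand side equals $\frac1N\intS (g-1)\log g\,dw$. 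We therefore arrive at
\[
\alpha(N+1)\intS (g-1)\log g\,dw\le \intS (g-1)^2\,dw .
\]

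\emph{Step 2: pointwise comparison.} The function $h(t)=\frac{t-1}{\log t}$, with $h(1)=1$, is continuous and strictly increasing on $(0,\infty)$. Moreover $(t-1)\log t\ge0$, with equality only at $t=1$. Writing $(g-1)^2=h(g)\,(g-1)\log g$ and using $0<g\le D$, we get $(g-1)^2\le h(D)(g-1)\log g$ pointwise.

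\emph{Step 3: strictness and conclusion.} Because $u$ is non-constant and continuous, $g$ is non-constant. So the set where $g\neq1$ and $g<D$ is open and nonempty. On this set the pointwise inequality of Step 2 is strict, and $(g-1)\log g>0$ there. Integrating therefore gives
\[
\intS (g-1)^2\,dw< h(D)\intS (g-1)\log g\,dw ,
\]
and also $\intS (g-1)\log g\,dw>0$. Combining with Step 1 and dividing by this positive integral yields $\alpha(N+1)<h(D)=\frac{D-1}{\log D}$. The second inequality in \eqref{eq:amplitude} is immediate from $\alpha\ge\frac12$.

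The only delicate point is the strictness in Step 3, which the positivity argument above handles. The spectral gap step is routine once Lemma \ref{lem:center} removes the degree-one modes of both $w$ and $g-1$.
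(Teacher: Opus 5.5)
Your proposal is correct and follows essentially the same route as the paper. Lemma~\ref{lem:center} removes the degree-one modes, and the spectral gap $p_k\ge (N+1)!$ for $k\ge2$ then gives $\alpha(N+1)\intS(g-1)\log g\,dw\le\intS(g-1)^2\,dw$; your bound $\intS wP_Nw\,dw\le\frac{1}{(N+1)!}\intS|P_Nw|^2\,dw$ is the same as the paper's termwise inequality $p_k^2\ge (N+1)!\,p_k$, and the monotonicity of $\frac{t-1}{\log t}$ finishes the argument, with your strictness step on the nonempty open set $\{g\ne1,\ g<D\}$ playing the role of the paper's set $\{0<g<1\}$.
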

\begin{proof}
We write \eqref{eq:main} equivalently as
$$\alpha P_N V=N!(g-1),$$
where 
\begin{equation*}
V=\log g-\intS \log g dw.
\end{equation*}

By \eqref{eq:center}, both $g-1$ and $V$ contain no spherical harmonics of degree one. Then we can expand
\begin{equation*}
V=\log g-\intS \log g dw=:\sum_{k=2}^{\infty}V_k,
\end{equation*}
where $V_k\in \mathcal{H}_k$ is spherical harmonic of degree $k$.

Consequently, by \eqref{eq:spectrum}, for $p_k=\frac{(k+N-1)!}{(k-1)!}$, we have
\begin{equation}\label{eq:spectralgap}
	\begin{aligned}
			\intS(g-1)^2\,dw = &\sum_{k\ge 2}(\frac{\alpha}{N!})^2p_k^2 \|V_k\|_2^2\ge \alpha(N+1)\sum_{k\ge 2}\frac{\alpha}{N!}p_k \|V_k\|_2^2\\
			=&\alpha(N+1)\intS(g-1)\log g\,dw.
	\end{aligned}
\end{equation}

The function $\frac{t-1}{\log t}$ is strictly increasing for $t>0$. Therefore
\begin{equation*}
(g-1)^2\leq \frac{D-1}{\log D}(g-1)\log g.
\end{equation*}
Here the inequality is strict in $\{0<g<1\}$, which is empty since $u$ is non-constant and $\intS gdw=1$.
    
Integrating this inequality and comparing with \eqref{eq:spectralgap} yields \eqref{eq:amplitude}.
\end{proof}

\medskip

\subsection{An upper bound for $D$}
In the following part of this section, we derive an upper bound for $D$ from the logarithmic Green potential.

For $s>0$, define the probability measure
\begin{equation}\label{eq:nu}
 d\nu_s(t)=\frac{(1-t^2)^{s/2-1}\,dt}{\int_{-1}^{1}(1-t^2)^{s/2-1}\,dt}.
\end{equation}
Then we have
\begin{equation}\label{eq:latitude-moments}
 \int_{-1}^1\,d\nu_s=1,\qquad
 \int_{-1}^1t\,d\nu_s(t)=0.
\end{equation}

We also define
\begin{equation}\label{eq:Bdef}
\mathcal{B}(s):=\int_{-1}^{1}-\log (1-t)d\nu_s(t)=\int_{0}^{1}\frac{t^{s-1}}{1+t}dt.
\end{equation}

For a unit vector $v\in \mathbb{S}^N$, we define the logarithmic potential
\begin{equation*}
		T_N(v)=-\intS\log(1-v\cdot\xi)\,d\mu(\xi).
\end{equation*}

Then \eqref{eq:green} gives
\begin{equation}\label{eq:logD}
\log g(v)=N \bar{u}-\log \gamma+\frac{N}{\alpha}(T_N(v)-\mathcal{B}(N)).
\end{equation}

By Jensen's inequality, we have
\begin{equation}\label{eq:Nuloggamma}
N \bar{u}-\log \gamma=\intS \log g dw\leq \log \intS gdw=0.
\end{equation}

Meanwhile, the beta integral gives
\begin{equation*}
	\mathcal{B}(N)=\psi(N)-\psi(\frac{N}{2})-\log 2,
\end{equation*}
where $\psi(x):=\frac{\Gamma'(x)}{\Gamma(x)}$ is the digamma function.

It suffices to estimate $T_N$. The following proposition gives an upper bound that is uniform in $v$. 
\begin{proposition}\label{prop:TNbound}
Let $$c=N\frac{1-m}{1+m}, \qquad\theta=1-\frac{1-(c+1)m}{c+2}=1-\frac{(1-m)(1-(N-1)m)}{(c+2)(1+m)}\in (0,1).$$
Let
\begin{equation*}
    H_N(m)=\theta \mathcal{B}(c)-(1-\theta)(\log 2-\frac{1}{2}).
\end{equation*}
Then for any $v\in \mathbb{S}^N$, we have
\begin{equation}\label{eq:TNbound}
	0\leq T_N(v)<H_N(m).
\end{equation} 
\end{proposition}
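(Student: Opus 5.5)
The lower bound in \eqref{eq:TNbound} should follow from Jensen's inequality alone. Write $x=v\cdot\xi$ and let $\rho$ be the push-forward of $\mu$ under $\xi\mapsto x$. By \eqref{eq:center}, $\rho$ is a probability measure on $[-1,1]$ with $\int x\,d\rho=0$. Since $-\log(1-x)$ is convex,
\begin{equation*}
T_N(v)=\int_{-1}^1-\log(1-x)\,d\rho(x)\ge-\log\Bigl(1-\int x\,d\rho\Bigr)=0 .
\end{equation*}
In addition, $\int x^2\,d\rho=v^{T}Mv\le m$ because $m=\lambda_{\max}(M)$.

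For the upper bound, the plan is to compare $\rho$ with the latitude law $\nu_c$ of \eqref{eq:nu}, with $c=N\frac{1-m}{1+m}$. The shape of $H_N(m)$ suggests viewing $\rho$ as dominated by the mixture $\theta\nu_c+(1-\theta)\kappa$. Here $\kappa$ is a reference measure concentrated away from $x=1$, with $\int-\log(1-x)\,d\kappa\le-(\log2-\tfrac12)$. For instance $\kappa$ could put mass near $x=-1$, where $-\log(1-x)=-\log 2$, with a correction term accounting for the $\tfrac12$.

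The tool for this comparison is Lemma \ref{lem:chord}. I would apply it with test functions $\varphi$ that are nondecreasing in $x$, so that the left-hand side of \eqref{eq:chord} is nonnegative. I would also bound the left-hand side from above using $(x-y)^2\le2(1-s)$ and the estimate $\iintS D_v\,d\mu\,d\mu\ge\frac{2m}{1+m}$ from the proof of Proposition \ref{prop:M}. Combined with $\alpha\ge\frac12$ and $\int x^2\,d\rho\le m$, the aim is a one-dimensional inequality of the form
\begin{equation*}
\int(1-x^2)\varphi'(x)\,d\rho\ \ge\ c\int x\varphi(x)\,d\rho-\text{(error controlled by }1-\theta) .
\end{equation*}
The reason for this form is that $\nu_c$ is exactly the measure for which $\int(1-x^2)\varphi'\,d\nu_c=c\int x\varphi\,d\nu_c$, which is the integration-by-parts identity for the weight $(1-x^2)^{c/2-1}$. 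So $c$ is the effective dimension produced by the second-moment constraint. The remaining step is to choose $\varphi$ as the solution of the adjoint ODE
\begin{equation*}
(1-x^2)\varphi'-cx\varphi=f-\textstyle\int f\,d\nu_c, \qquad f(x)=-\log(1-x).
\end{equation*}
This $\varphi$ is monotone because $f$ is increasing. Substituting it converts the inequality into $\int f\,d\rho\le\theta\mathcal B(c)+(1-\theta)(\text{boundary term})$. Strictness comes from $g>0$, exactly as in Proposition \ref{prop:M}. The fact that $\theta\in(0,1)$ follows from $m<\frac1{N-1}$ in Proposition \ref{prop:M}.

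The main obstacle is producing the defect term with exactly the weight $1-\theta=\frac{1-(c+1)m}{c+2}$ and identifying its value $-(\log2-\tfrac12)$. I would first test $\varphi(x)=x$ and $\varphi(x)=x^2$ in \eqref{eq:chord}, eliminating $\alpha$ with the second-moment information. This should fix the relation between $c$, $m$ and $\theta$. Only then would I handle the logarithmic test function, controlling the singularity at $x=1$ by the $\varepsilon$-regularization already used in Lemma \ref{lem:green}.
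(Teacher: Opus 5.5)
Your lower bound $T_N(v)\ge0$ is exactly the paper's Jensen argument. The upper bound, however, has a genuine gap. You need the one-dimensional inequality $\int(1-x^2)\varphi'\,d\rho\ge c\int x\varphi\,d\rho$ at a \emph{fixed} direction $v$, and the tools you list do not produce it. Via \eqref{eq:chord}, a lower bound on $\int(1-x^2)\varphi'\,d\mu$ requires a \emph{lower} bound on $\iintS\frac{W}{1-s}\,d\mu\,d\mu$, where $W=(x-y)(\varphi(x)-\varphi(y))$. The pointwise bound $(x-y)^2\le2(1-s)$ gives an upper bound instead, and $\iintS D_v\,d\mu\,d\mu\ge\frac{2m}{1+m}$ only concerns $\varphi(x)=x$.

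The device that works is Cauchy--Schwarz, $\bigl(\iintS W\bigr)^2<\iintS\frac{W}{1-s}\,\iintS(1-s)W$. Here $\iintS W\,d\mu\,d\mu=2A$ and $\iintS(1-s)W\,d\mu\,d\mu=2A+2(Mv)\cdot Z$, with $A=\intS x\varphi\,d\mu$ and $Z=\intS\xi\varphi(x)\,d\mu$. Since $Z\cdot v=A$, one has $(Mv)\cdot Z=m_vA+(Mv)^\perp\cdot Z^\perp$, where $\perp$ denotes the component orthogonal to $v$. This transverse term has no sign, and the argument yields $B>cA$ only when it is nonpositive, e.g.\ when $Z\parallel v$. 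At a generic $v$ this is not available. That includes the maximum point of $g$, which is exactly where Corollary~\ref{cor:logDupper} uses the bound. The same obstruction hits your ODE test function. There is also a sign slip: with right-hand side $f-\int f\,d\nu_c$, your inequality gives a \emph{lower} bound on $\int f\,d\rho$, and that $\varphi$ is nonincreasing. You would need $\int f\,d\nu_c-f$ instead.

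The missing idea is to let the direction depend on the test function. The paper takes $\varphi=(1+x)^{j-1}$ and $v=v_j$ maximizing $\mathcal{M}_j(v)=\intS(1+v\cdot\xi)^j\,d\mu$. The first-order condition there is exactly $Z=Av$, so Cauchy--Schwarz gives $B>cA$. Since $A=\mathcal{M}_j-\mathcal{M}_{j-1}$ and $B=(j-1)(2\mathcal{M}_{j-1}-\mathcal{M}_j)$, this becomes the recursion $\mathcal{M}_j^*<\frac{2(j-1)+c}{j-1+c}\mathcal{M}_{j-1}^*$ among the \emph{maximal} moments. It closes because the quantity being maximized is precisely the top term the inequality controls.

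The moments $Q_j$ of $\nu_c$ satisfy this recursion with equality. Also $\mathcal{M}_1^*=Q_1=1$ and $\mathcal{M}_2^*=1+m=\theta Q_2$, so $\mathcal{M}_j^*\le\theta Q_j$ for $j\ge2$. The expansion $-\log(1-t)=-\log2+\sum_{j\ge1}\frac{(1+t)^j}{j2^j}$ has positive coefficients, so these bounds hold for every $v$ simultaneously and sum to $H_N(m)$.

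This also dissolves your ``main obstacle''. The factor $\theta$ is simply $\mathcal{M}_2^*/Q_2$; it does not come from testing $\varphi=x,x^2$ to eliminate $\alpha$. The term $-(1-\theta)(\log2-\frac12)$ equals $(1-\theta)\bigl(f(-1)+f'(-1)\bigr)$ for $f=-\log(1-t)$, i.e.\ the unscaled $j=0,1$ terms. That is a functional, not a positive reference measure $\kappa$.
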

\begin{proof}
By Jensen's inequality and \eqref{eq:center}, we have
\begin{equation*}
T_N(v)\geq -\log \intS (1-v\cdot \xi)d\mu (\xi)=0.
\end{equation*}

For any $j\geq 1$, define
\begin{equation*}
\mathcal{M}_j(v)=\intS (1+v\cdot \xi)^j d\mu (\xi)
\end{equation*}
and $\mathcal{M}_j^*=\max_{v\in \mathbb{S}^N}\mathcal{M}_j(v)$.

By \eqref{eq:center}, we have $\mathcal{M}_1^*=1$. For $j\geq 2$, we choose $v_j\in \mathbb{S}^N$ such that $\mathcal{M}_j(v_j)=\mathcal{M}_j^*$. For simplicity, in the following, we write  $v=v_j$. Then for $x=v\cdot \xi$ and any $e\in \mathbb{S}^N$ with $e\cdot v=0$, we have
\begin{equation*}
0=\partial_e \mathcal{M}_j(v)=j\intS (e\cdot \xi)(1+x)^{j-1}d\mu(\xi)=j e\cdot Z,
\end{equation*}
where
\begin{equation}\label{eq:ZAv}
Z:=\intS \xi (1+x)^{j-1 }d\mu (\xi).
\end{equation}
It follows that $Z=Av$ with
\begin{equation*}
A:=\intS x(1+x)^{j-1}d\mu(\xi).
\end{equation*}

Write
\begin{equation*}
W(x,y):=(x-y)((1+x)^{j-1}-(1+y)^{j-1}).
\end{equation*}
Then $W\geq 0$ and the equality holds if and only if $x=y$. Let $m_v=v^T Mv$. By \eqref{eq:center}, we have
\begin{equation}\label{eq:W}
\iintS W d\mu d\mu=2A
\end{equation}
and
\begin{equation}\label{eq:W(1-s)}
\iintS W(1-s) d\mu d\mu=2A+2(Mv)\cdot Z=2(1+m_v)A.
\end{equation}
In particular, we have $A>0$.

Set
\begin{equation*}
B=(j-1)\intS (1-x^2)(1+x)^{j-2}d\mu.
\end{equation*}

Applying Lemma \ref{lem:chord} with $\varphi(x)=(1+x)^{j-1}$, we obtain that
\begin{equation}\label{eq:W/(1-s)}
\iintS \frac{W}{1-s} d\mu d\mu=2(1-\alpha)A+\frac{2\alpha}{N}B.
\end{equation}

By Cauchy--Schwarz inequality, we have
\begin{equation*}
\left(\iintS W\,d\mu(\xi)\,d\mu(\eta)\right)^2<\left(\iintS \frac{W}{1-s}d\mu(\xi) d\mu(\eta)\right)\left(\iintS (1-s)W\,d\mu(\xi)\,d\mu(\eta)\right).
\end{equation*}
Here the equality cannot occur since $g>0$. Substituting \eqref{eq:W}, \eqref{eq:W(1-s)} and \eqref{eq:W/(1-s)} into this inequality gives
\begin{equation*}
1<(1+m_v)(1-\alpha+\frac{\alpha}{N}\frac{B}{A}).
\end{equation*}
Since $m_v\leq m$ and $\alpha\geq \frac{1}{2}$, we have
\begin{equation}\label{eq:B/A}
\frac{B}{A}>N\left(1-\frac{m_v}{\alpha (1+m_v)}\right)\geq \frac{1-m}{1+m}N=c.
\end{equation}

On the other hand, by the definition of $A$ and $B$, we have $A=\mathcal{M}_j(v)-\mathcal{M}_{j-1}(v)$ and $B=(j-1)(2\mathcal{M}_{j-1}(v)-\mathcal{M}_{j}(v))$.
It follows from \eqref{eq:B/A} that for any $j\geq 2$,
\begin{equation}\label{Mj recur}
\mathcal{M}_{j}^*<\frac{2(j-1)+c}{j-1+c}\mathcal{M}_{j-1}^*.
\end{equation}

Set
\begin{equation*}
Q_j=\int_{-1}^{1}(1+t)^jd\nu_c(t)=2^j \frac{(\frac{c}{2})_j}{(c)_j},
\end{equation*}
where $(c)_j$ is the Pochhammer symbol. Then $Q_j$ satisfies the equality in \eqref{Mj recur}. Moreover, we have  $\mathcal{M}_1^*=Q_1=1$, and
\begin{equation*}
    \mathcal{M}_2^*=1+m,\qquad Q_2=\frac{c+2}{c+1}.
\end{equation*}
By definition, $\theta=\frac{\mathcal{M}_2*}{Q_2}$. Then we can prove inductively that
\begin{equation*}
\mathcal{M}_j^*\le\theta Q_j, \quad j\ge 2,
\end{equation*}
with strict inequality for $j\geq 3$.

Since for any $-1\leq t< 1$, 
\begin{equation*}
-\log (1-t)=-\log 2+\sum_{j=1}^{\infty}\frac{(1+t)^j}{j2^j},
\end{equation*}
we obtain that
\begin{equation*}
T_N(v)=-\log 2+\intS\sum_{j=1}^{\infty}\frac{(1+v\cdot \xi)^j}{j2^j} d\mu(\xi)<-\log 2+\frac{1}{2}+\theta\sum_{j=2}^\infty\frac{Q_j}{j2^j}=\theta\mathcal{B}(c)-(1-\theta) \left(\log 2-\frac{1}{2}\right),
\end{equation*}
which is exactly \eqref{eq:TNbound}.

\end{proof}

The minimum of $g$ gives the following  lower bound for $T_N(v)$.
\begin{lemma}\label{lem TN low}
    Recall that $g_*=\min_{\mathbb{S}^N} g$. Then for every $v\in\mathbb{S}^N$,
    \begin{equation}\label{T low}
        T_N(v)\ge g_*\mathcal{B}(N).
    \end{equation}
\end{lemma}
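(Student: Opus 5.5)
The natural approach is to split the measure $d\mu=g\,dw$ into a part bounded below by a multiple of the uniform measure and a nonnegative remainder. We then use that the logarithmic kernel integrates to $\mathcal{B}(N)$ against the uniform measure, and to a nonnegative quantity against the (centered) remainder by Jensen. Concretely, write
\begin{equation*}
g=g_*+(1-g_*)g_{g_*},\qquad g_{g_*}:=\frac{g-g_*}{1-g_*}\ge 0,
\end{equation*}
exactly as in the proof of Proposition~\ref{prop:M} with $\delta=g_*$. This uses $g_*\le 1$, which holds because $\intS g\,dw=1$. If $g_*=1$, then $g\equiv1$ and $T_N(v)=\mathcal{B}(N)$, so the statement is trivial; hence we may assume $g_*<1$. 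Then $d\mu_{g_*}:=g_{g_*}dw$ is a probability measure, and by Lemma~\ref{lem:center} it has zero center of mass, $\intS\xi\,d\mu_{g_*}=0$.

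The first step is to decompose
\begin{equation*}
T_N(v)=g_*\intS-\log(1-v\cdot\xi)\,dw(\xi)+(1-g_*)\intS-\log(1-v\cdot\xi)\,d\mu_{g_*}(\xi).
\end{equation*}
For the first integral, the pushforward of $dw$ under $\xi\mapsto v\cdot\xi$ is $d\nu_N$ from \eqref{eq:nu}, since the latitude density on $\SN$ is proportional to $(1-t^2)^{(N-2)/2}$. By \eqref{eq:Bdef}, the first integral therefore equals exactly $\mathcal{B}(N)$.

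For the second integral, apply Jensen's inequality to the convex function $-\log$. This gives
\begin{equation*}
\intS-\log(1-v\cdot\xi)\,d\mu_{g_*}\ge-\log\intS(1-v\cdot\xi)\,d\mu_{g_*}=-\log 1=0,
\end{equation*}
where the last equality uses the vanishing center of mass. This is the same argument as the lower bound $T_N(v)\ge0$ in Proposition~\ref{prop:TNbound}. Combining the two pieces yields $T_N(v)\ge g_*\mathcal{B}(N)$.

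The only mildly delicate point is integrability, since $-\log(1-v\cdot\xi)$ is singular at $\xi=v$. The singularity is only logarithmic, so the function is integrable against $dw$, and hence also against $g\,dw$ and $g_{g_*}\,dw$ because $g$ is bounded by smoothness. Jensen's inequality still applies because the integrand is bounded below by $-\log 2$. No real obstacle is expected.
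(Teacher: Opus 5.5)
Your proposal is correct and is essentially the paper's proof: you split $d\mu=g_*\,dw+(1-g_*)\,d\hat\mu$ with $\hat\mu$ a centered probability measure. The uniform part contributes exactly $g_*\mathcal{B}(N)$, and Jensen's inequality shows the remainder contributes a nonnegative amount.
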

\begin{proof}
    If $g\equiv 1$, then $g_*=1$, and equality in \eqref{T low} holds. Otherwise, we write
    \begin{equation*}
        d\mu=g_*dw+(1-g_*)d\hat\mu,\qquad d\hat{\mu}=\frac{g-g_*}{1-g_*}dw.
    \end{equation*}
    Then $\hat{\mu}$ is a probability measure and its center-of-mass is the origin. Hence, Jensen's inequality gives
    \begin{equation*}
        -\int_{\mathbb{S}^N}\log (1-v\cdot\xi)d\hat{\mu}\ge 0.
    \end{equation*}
    Therefore,
   \begin{equation*}
		T_N(v)=-\intS\log(1-v\cdot\xi)\,d\mu(\xi)\ge -\intS\log(1-v\cdot\xi)\,g_*dw=g_*\mathcal{B}(N).
\end{equation*}
\end{proof}

As a corollary, we have the following estimates on $D$ .
\begin{corollary}\label{cor:logDupper}
Let $N\geq 3$ and $\alpha\geq \frac{1}{2}$. If $u$ is a non-constant solution of \eqref{eq:main}, then 
\begin{equation}\label{est logD 1}
\log D<2N(H_N(m)-\mathcal{B}(N)),
\end{equation}
and
\begin{equation}\label{est logD 2}
    \log D<\log g_*+2N(H_N(m)-g_*\mathcal{B}(N)).
\end{equation}
\end{corollary}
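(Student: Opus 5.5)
Both bounds come from evaluating \eqref{eq:logD} at a point where $g$ attains its maximum, and then bounding the terms on the right. Let $v_0\in\SN$ be a point with $g(v_0)=D$. Then \eqref{eq:logD} reads
\begin{equation*}
\log D=\intS\log g\,dw+\frac{N}{\alpha}\bigl(T_N(v_0)-\mathcal B(N)\bigr),
\end{equation*}
because $N\bar u-\log\gamma=\intS\log g\,dw$.

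\textbf{Proof of \eqref{est logD 1}.} By \eqref{eq:Nuloggamma} the first term is $\le 0$, so it can be dropped. Proposition \ref{prop:TNbound} gives $T_N(v_0)<H_N(m)$. This yields $\log D<\frac{N}{\alpha}(H_N(m)-\mathcal B(N))$.

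It remains to replace $\frac N\alpha$ by $2N$. Since $\alpha\ge\frac12$, we have $\frac N\alpha\le 2N$. This upgrade is valid only when $T_N(v_0)-\mathcal B(N)\ge0$. The sign is forced as follows. Since $u$ is non-constant, $D>1$, so $\log D>0$. As the first term is $\le0$, we get
\begin{equation*}
\frac N\alpha\bigl(T_N(v_0)-\mathcal B(N)\bigr)\ge\log D>0.
\end{equation*}
Hence
\begin{equation*}
\log D\le\frac N\alpha\bigl(T_N(v_0)-\mathcal B(N)\bigr)\le 2N\bigl(T_N(v_0)-\mathcal B(N)\bigr)<2N\bigl(H_N(m)-\mathcal B(N)\bigr),
\end{equation*}
which is \eqref{est logD 1}.

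\textbf{Proof of \eqref{est logD 2}.} Here we use the minimum of $g$ instead of Jensen. Let $w_0$ be a point with $g(w_0)=g_*$ and apply \eqref{eq:logD} there. By Lemma \ref{lem TN low},
\begin{equation*}
\log g_*=\intS\log g\,dw+\frac N\alpha\bigl(T_N(w_0)-\mathcal B(N)\bigr)\ge\intS\log g\,dw+\frac N\alpha\bigl(g_*\mathcal B(N)-\mathcal B(N)\bigr).
\end{equation*}
Subtracting this from the identity at $v_0$ cancels the constant $\intS\log g\,dw$ and gives
\begin{equation*}
\log D-\log g_*\le\frac N\alpha\bigl(T_N(v_0)-g_*\mathcal B(N)\bigr)<\frac N\alpha\bigl(H_N(m)-g_*\mathcal B(N)\bigr).
\end{equation*}

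Again we must check the sign before replacing $\frac N\alpha$ by $2N$. Since $D>1$ and $g_*<1$ (recall $\intS g\,dw=1$ and $g$ is non-constant), the left side is positive. Hence $\frac N\alpha(T_N(v_0)-g_*\mathcal B(N))>0$, and $\frac N\alpha\le 2N$ gives \eqref{est logD 2}.

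\textbf{Main point to watch.} The only delicate step is this sign control, which is what makes the upgrade $\frac N\alpha\le 2N$ legitimate. In both cases it comes from the strict positivity of the left-hand side. Everything else is direct substitution of Proposition \ref{prop:TNbound} and Lemma \ref{lem TN low} into \eqref{eq:logD}.
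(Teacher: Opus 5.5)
Your proof is correct and takes the same approach as the paper. You evaluate \eqref{eq:logD} at a maximum point and drop $\intS\log g\,dw\le 0$; for \eqref{est logD 2} you subtract the identity at a minimum point and use Lemma~\ref{lem TN low}. Your explicit sign check before replacing $\frac N\alpha$ by $2N$ is the same step the paper uses implicitly when it writes $0<\log D\le 2N\bigl(T_N(v_+)-\mathcal B(N)\bigr)$ and $0<\log\frac{D}{g_*}$.
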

\begin{proof}
We choose $v_{\pm}\in \mathbb{S}^N$ such that $g(v_+)=D$ and $g(v_-)=g_*$. By \eqref{eq:logD}, \eqref{eq:Nuloggamma}, Proposition \ref{prop:TNbound} and $\alpha \geq \frac{1}{2}$, we have 
\begin{equation*}
0<\log D=\log g(v_+)\leq 2N(T_N(v_+)-\mathcal{B}(N))<2N(H_N(m)-\mathcal{B}(N)).
\end{equation*}   

Subtracting \eqref{eq:logD} at $v_-$ from the same identity at $v_+$ gives
\begin{equation*}
0<\log \frac{D}{g_*}=\frac{N}{\alpha}(T_N(v_+)-T_N(v_-)).
\end{equation*}
Then Proposition \ref{prop:TNbound}, Lemma \ref{lem TN low} and $\alpha\ge\frac{1}{2}$ yield \eqref{est logD 2}.
\end{proof}

\section{Proof of Theorem \ref{thm:main}}\label{sec pf main}
Finally, in this section, we give the proof of Theorem \ref{thm:main}.
\begin{proof}[Proof of Theorem \ref{thm:main}]
Suppose that $g$ is nonconstant. We first consider the case $N\geq 4$. Since $m<\frac{1}{N-1}$, $c=N\frac{1-m}{1+m}>N-2$. Using $H_N(m)<\mathcal{B}(c)$ and \eqref{eq:Bdef}, \eqref{est logD 1} gives
\begin{equation*}
    \begin{aligned}
        \log D<&2N(\mathcal{B}(c)-\mathcal{B}(N))<2N(\mathcal{B}(N-2)-\mathcal{B}(N))\\
        =&2N\int_0^1\frac{t^{N-3}-t^{N-1}}{1+t}dt=\frac{2N}{(N-2)(N-1)}\\
        \le&\frac{4}{3}.
    \end{aligned}
\end{equation*}
Since $\frac{t-1}{\log t}$ is strictly increasing for $t>0$, Lemma \ref{lem:amplitude} yields
\begin{equation*}
    \frac{5}{2}\le \alpha(N+1)<\frac{D-1}{\log D}<\frac{3(e^{\frac{4}{3}}-1)}{4}<\frac{9}{4},
\end{equation*}
a contradiction. Thus $g\equiv1$.

\medskip

It remains to consider the case $N=3$. According to the notations in Proposition \ref{prop:TNbound}, we define
\begin{equation*}
    b=\mathcal{B}(3)=\log 2-\frac{1}{2},\quad c(m)=3\frac{1-m}{1+m}.
\end{equation*}
\begin{equation*}
\theta(m)=1-\frac{(1-m)(1-2m)}{5-m},\quad
      H(m)=\theta(m) \mathcal{B}(c)-(1-\theta(m))b.
\end{equation*}

By Lemma \ref{lem:amplitude}, 
\begin{equation*}
    \frac{D-1}{\log D}>4\alpha\ge 2.
\end{equation*}
Since $e^{\frac{5}{4}}<\frac{7}{2}$, we obtain that
\begin{equation*}
    \log D>\frac{5}{4}.
\end{equation*}
We prove the opposite inequality by splitting according to the value of $m$.

If $m\le \frac{1}{3}$, then $c(m)\ge \frac{3}{2}$, and
\begin{equation*}
    \frac{20}{21}-\theta(m)=\frac{2(3m-1)(7m-8)}{21(5-m)}\ge 0.
\end{equation*}
Consequently
\begin{equation*}
    H(m)-b\leq \frac{1}{21}(20\mathcal{B}(\frac{3}{2})-22b).
\end{equation*}

Thus, \eqref{est logD 1} implies that
\begin{equation*}
    \log D\le \frac{2}{7}(20\mathcal{B}(\frac{3}{2})-22b)<\frac{5}{4},
\end{equation*}
a contradiction.

Now we suppose $m>\frac{1}{3}$. Set $r=1-2m\in (0,\frac{1}{3})$. Then Lemma \ref{lem:g_*upper} implies $g_*<r$. By \eqref{est logD 2}, we have
\begin{equation*}
    \log D<\log g_*+6(H(m)-g_*b).
\end{equation*}
Since $b<\frac{1}{2}$, the function $t\mapsto \log t-6bt$ is strictly increasing on $(0, \frac{1}{3}]$. Therefore
\begin{equation*}
    \log D<\log r+6(H(m)-br)<\frac{5}{4},
\end{equation*}
which is also a contradiction.

In conclusion, in every case, we proved that $g$ is the constant function $1$, and hence $u$ is constant. This proves Theorem \ref{thm:main}.
\end{proof}

\section*{Acknowledgements}
 The research of  C. Gui is supported by NSFC Key Program (Grant No.12531010), University of Macau research grants CPG2024-00016-FST, CPG2025-00032-FST, CPG2026-00027-FST, SRG2023-00011-FST, MYRGGRG2023-00139-FST-UMDF, UMDF Professorial Fellowship of Mathematics, Macao SAR FDCT 0003/2023/RIA1 and Macao SAR FDCT 0024/2023/RIB1.
 The research of Y. Hu was partially supported by the National Key R\&D Program of China (2022YFA1006800), the National Natural Science Foundation of China (Grant No.~12471205), the Science and Technology Innovation Program of Hunan Province (2025RC1005), the Hunan Basic Science Research Center for Mathematical Analysis (2024JC2002), and the Key Project of the Hunan Provincial Education Department (24A0004).
The research of J. Wei is partially supported by General Research Grant of HKSAR GRF 14309824. The authors acknowledge the use of AI tools. All mathematical arguments and proofs in the final manuscript were checked and written by the authors.

\end{document}